\newtheorem{theorem}{Theorem}[section]
\newtheorem{example}{Example}[section]
\newtheorem{definition}{Definition}[section]
\newtheorem{proposition}{Proposition}[section]
\newtheorem{remark}{Remark}[section]
\numberwithin{equation}{section}
\begin{document}
\large
\begin{center}
   \textbf{A New Method For Solving Fractional  And Classical Differential Equations Based On a New Generalized  Fractional Power Series }
   \end{center}
   \hrule
   \begin{center}
   \textbf{Youness Assebbane}$^{(1,a)}$,
   \textbf{Mohamed Echchehira}$^{(1,b)}$, \textbf{Mohamed Bouaouid}$^{(2,c)}$ and \textbf{Mustapha Atraoui}$^{(1,d)}$
\end{center}
\begin{center}
$^{(1)}$ Ibn Zohr University, Ait Melloul University Campus, Faculty of Applied Sciences, Department of
Mathematics, Research Laboratory for Innovation in Mathematics and Intelligent Systems, BP 6146, 86150, Agadir, Morocco.\\
$^{(2)}$Sultan Moulay Slimane University, National School of Applied Sciences, 23000 Béni-Mellal, Morocco.
\end{center}
\begin{center}
$^{(a)}$ youness.assebbane.97@edu.uiz.ac.ma
\end{center}
\begin{center}
$^{(b)}$ mohamed.echchehira.57@edu.uiz.ac.ma
\end{center}
\begin{center}
$^{(c)}$ bouaouidfst@gmail.com
\end{center}
\begin{center}
$^{(d)}$ m.atraoui@uiz.ac.ma
\end{center}

\section*{Abstract}
 The main objective of this paper is to introduce an algorithm for solving fractional and classical differential equations based on a new generalized fractional power series. The algorithm relies on  expanding the solution of an FDE or an ODE as a generalized power series, shedding light on the choice of the exponent for the monomials. Furthermore, it accommodates situations where terms in the equation are multiplied by $t^{\alpha}$ for example. The key contribution is how the exponents for these terms are chosen, which is different from traditional methods.

\begin{description}
 
  \item[Keywords:] fractional power series; fractional-order differential equations; Riemann-Liouville fractional derivative; Caputo fractional derivative; Mittag-Leffler function.
  \end{description}
\section{Introduction}
In recent years, considerable interest has been shown in the so-called fractional calculus, which allows for integration and differentiation of any order, not necessarily integer. These methods have been used as excellent sources and tools to model many phenomena in various fields of engineering, science, and technology. Furthermore, these tools are also used in fields such as epidemiology \cite{Angstmann10}; fluid and solid mechanical models\cite{oliveira5}; compartment models \cite{Angstmann11}; economics and finance \cite{Tarasov15}; fractal media \cite{Henry14}; nerve cell signaling \cite{Matlob13}; viscoelastic materials \cite{Jaradat12}; and many more.\\

Laplace transform methods are applicable to linear fractional order differential equations with constant coefficients (see, for example,\cite{Lin}). However, this method faces limitations, especially when dealing with inhomogeneous equations, as it can be challenging to find closed-form Laplace transforms for certain functions or to invert these transforms into known computable functions. For linear fractional order differential equations with variable coefficients, an additional limitation arises: the Laplace transforms of products of functions can typically only be determined in exceptional cases.\\

In the context of linear differential equations with integer orders and variable coefficients, series expansion methods are well-recognized for their efficacy in finding solutions. Many studies have sought to extend these methods to linear fractional order differential equations, which are seen as a specific case of our algorithm, considered more effective in solving both fractional and classical differential equations involving variable coefficients.\\

The structure of this paper is as follows. In Section 2, we briefly recall some needed preliminaries. In Section 3, we describe a generalized fractional power series solution for fractional differential equations. In Section 4, we introduce a new algorithm for solving both fractional and classical differential equations and provide some examples illustrating the efficacy of this method. In Section 6, we summarize the findings and contributions of this paper.\\

\section{Definitions and Preliminaries}

 We define some essential definitions related to fractional calculus that is going to be used throughout the
paper.
\begin{definition}
  For $0<\alpha \leq 1$, and $t \geq 0$, the Riemann-Liouville integral of order $\alpha$ is
$$
{ }_0 \mathcal{D}_t^{-\alpha} y(t)=\frac{1}{\Gamma(\alpha)} \int_0^t \frac{y(\tau)}{(t-\tau)^{1-\alpha}} d \tau .
$$
where the Euler gamma function $\Gamma(\cdot)$ is defined by
$$
\Gamma(z)=\int_0^{\infty} t^{z-1} e^{-t} d t \quad(\mathbb{R}(z)>0) .
$$
\end{definition}
\begin{definition} For $0<\alpha \leq 1$, and $t \geq 0$, the Riemann-Liouville derivative of order $1-\alpha$ is
$$
{ }_0 \mathcal{D}_t^{1-\alpha} y(t)=\frac{1}{\Gamma(\alpha)} \frac{d}{d t} \int_0^t \frac{y(\tau)}{(t-\tau)^{1-\alpha}} d \tau .
$$
\end{definition}
\begin{definition}
 For $t \geq 0$, the Caputo derivative of order $\alpha$ is
$$
 { }^{C} \mathcal{D}_t^{\alpha} f(t)= \begin{cases}f^{(n)}(t) & \text { if } \alpha=n \in \mathbb{N}, \\ \frac{1}{\Gamma(n-\alpha)} \int_0^t \frac{f^{(n)}(x)}{(t-x)^{\alpha-n+1}} d x & \text { if } n-1<\alpha<n,\end{cases}
$$

\end{definition}

\begin{theorem}
     The Caputo fractional derivative of the power function satisfies
$$
^{C}D^\alpha t^p= \begin{cases}\frac{\Gamma(p+1)}{\Gamma(p-\alpha+1)} t^{p-\alpha}  & n-1<\alpha<n, p>n-1, p \in \mathbb{R}, \\ 0, & n-1<\alpha<n, p \leq n-1, p \in \mathbb{N} .\end{cases}
$$
\end{theorem}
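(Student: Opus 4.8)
The plan is to prove the two cases separately, working directly from the Caputo derivative definition.

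For the first case ($n-1 < \alpha < n$, $p > n-1$), I would start from the integral definition of the Caputo derivative applied to $f(t) = t^p$. First I would compute the $n$-th ordinary derivative $f^{(n)}(t) = \frac{\Gamma(p+1)}{\Gamma(p-n+1)} t^{p-n}$, which is valid since $p > n-1$ guarantees the exponent $p-n > -1$ keeps the subsequent integral convergent. Substituting this into $^{C}D^\alpha t^p = \frac{1}{\Gamma(n-\alpha)} \int_0^t \frac{f^{(n)}(x)}{(t-x)^{\alpha-n+1}}\,dx$ reduces the problem to evaluating a single integral of the form $\int_0^t x^{p-n}(t-x)^{n-\alpha-1}\,dx$. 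The plan is to handle this via the substitution $x = t\tau$, which turns the integral into $t^{p-\alpha}$ times the Beta integral $\int_0^1 \tau^{p-n}(1-\tau)^{n-\alpha-1}\,d\tau = B(p-n+1,\,n-\alpha)$.

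The key step is then to invoke the Beta–Gamma relation $B(u,v) = \frac{\Gamma(u)\Gamma(v)}{\Gamma(u+v)}$ with $u = p-n+1$ and $v = n-\alpha$, so that $B(p-n+1,\,n-\alpha) = \frac{\Gamma(p-n+1)\Gamma(n-\alpha)}{\Gamma(p-\alpha+1)}$. Assembling the constants, the factor $\Gamma(p-n+1)$ cancels against the $f^{(n)}$ coefficient and $\Gamma(n-\alpha)$ cancels against the prefactor $\frac{1}{\Gamma(n-\alpha)}$, leaving exactly $\frac{\Gamma(p+1)}{\Gamma(p-\alpha+1)} t^{p-\alpha}$, as claimed.

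For the second case ($p \le n-1$ with $p \in \mathbb{N}$), the argument is immediate: since $p$ is a nonnegative integer not exceeding $n-1$, the polynomial $t^p$ has degree at most $n-1$, so its $n$-th ordinary derivative $f^{(n)}(t)$ vanishes identically. Both branches of the Caputo definition then give zero — the integral branch has a zero integrand, and if one instead views it through $f^{(n)}$ directly the result is $0$ — establishing $^{C}D^\alpha t^p = 0$.

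I expect the main obstacle to be purely technical rather than conceptual: justifying the convergence and the interchange implicit in applying the Beta integral when $p-n$ lies in $(-1,0)$, i.e. confirming that the integrability condition $p > n-1$ is exactly what is needed for $\int_0^t x^{p-n}(t-x)^{n-\alpha-1}\,dx$ to converge at the endpoint $x=0$. Once the hypothesis $p > n-1$ is tied precisely to this convergence requirement, the Beta-function evaluation and the Gamma cancellations are routine.
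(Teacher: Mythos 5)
Your proposal is correct: the reduction of the Caputo integral to a Beta function via the substitution $x = t\tau$, the identity $B(p-n+1,\,n-\alpha) = \frac{\Gamma(p-n+1)\Gamma(n-\alpha)}{\Gamma(p-\alpha+1)}$, and the observation that $t^p$ with $p \in \mathbb{N}$, $p \le n-1$ is annihilated by $\frac{d^n}{dt^n}$ together give exactly the stated formula, and you correctly identify $p > n-1$ as precisely the condition for convergence of the integral at $x=0$. The paper itself states this theorem as a standard preliminary without any proof, so there is nothing to compare against; your argument is the standard and complete derivation.
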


\begin{definition}(\cite{Khalil}) The conformable fractional derivative of order
$\alpha\in (0,1)$
of a function $x(.)$ for $t>0$ is defined by
$$
T_{\alpha}x(t)=\displaystyle{\lim_{\varepsilon\longrightarrow0}\frac{x(t+\varepsilon t^{1-\alpha})-x(t)}{\varepsilon}},\hspace{0.2cm}t>0,$$
$$
T_{\alpha}x(0)=\displaystyle{\lim_{t\longrightarrow0^{+}}T_{\alpha}x(t)},
$$
provided that the limits exist.
\end{definition}
The fractional integral $I^{\alpha}(.)$ associated with the conformable fractional
derivative is defined by
 $$I^{\alpha}(x)(t)=\int_{0}^{t}s^{\alpha-1}x(s)ds.$$
\begin{theorem}(\cite{Khalil}) For a continuous function $x(.)$ in the domain of
$I^{\alpha}(.)$, we have
 $$T_{\alpha}(I^{\alpha}(x)(t))=x(t).$$
\end{theorem}

\begin{definition}
 If $\alpha \in \mathbb{R}, \notin \mathbb{Z}$, then a generalized fractional power series for a real function $y(t)$ is an infinite series of the form
 $$
  y(t)=\sum_{i \geq 0, j \geq 0}^{\infty} C_{i, j} t^{i \alpha+j}
$$
\end{definition}

\begin{definition} 
If $\alpha\in \mathbb{R}^{+}$, and $z \in \mathbb{R}$, then
$$
E_{\alpha}(z)=\sum_{n=0}^{\infty} \frac{z^n}{\Gamma(\alpha n+1)}
$$
defines the  Mittag-Leffler function.
\end{definition}
\begin{definition} 
If $\alpha, \beta \in \mathbb{R}^{+}$, and $z \in \mathbb{R}$, then
$$
E_{\alpha, \beta}(z)=\sum_{n=0}^{\infty} \frac{z^n}{\Gamma(\alpha n+\beta)}
$$
defines the generalized Mittag-Leffler function.
\end{definition}
\begin{definition}
If $\alpha, \beta$ and $ \gamma$ $ \in \mathbb{R}^{+}$, and $z \in \mathbb{C}$\\
$$
E_{\alpha, \beta, \gamma}(z)=1+\sum_{k=1}^{\infty} c_k z_k=1+\sum_{k=1}^{\infty}\left[\prod_{i=0}^{k-1} \frac{\Gamma[\alpha(i \beta+\gamma)+1]}{\Gamma[\alpha(i \beta+\gamma+1)+1]}\right] z^k
$$

 Defined    the Mittag-Leffler of tree index, this entire function, coincides for $\beta=1$ with the  generalized Mittag-Leffler function 
$$
E_{\alpha, 1, \gamma}(z)=\Gamma(\alpha \gamma+1) E_{\alpha, \alpha \gamma+1}(z),  $$ 
\end{definition}
\begin{definition}
     If $a_i, b_j, \alpha_i, \beta_j, \in \mathbb{R}^{+}$, and $z \in \mathbb{R}$ then
$$
{ }_p \Psi_q\left[\begin{array}{c}
\left(a_i, \alpha_i\right)_{1, p} \\
\left(b_j, \beta_j\right)_{1, q}
\end{array}\Bigr|z\right]=\sum_{k=0}^{\infty} \frac{\prod_{i=1, p} \Gamma\left(a_i+\alpha_i k\right)}{\prod_{j=1, q} \Gamma\left(b_j+\beta_j k\right)} \frac{z^k}{k !}
$$
Then, the standard Wright function  being obtained from (2.7) when $p=0$ and $q=1$ with $\beta_{1}=\lambda>-1, b_1=\mu$, reads
$$
W_{\lambda, \mu}(z) \equiv{ }_0 \Psi_1\left[\begin{array}{c}
-- \\
(\mu, \lambda)
\end{array} \Bigr| z\right] .
$$\\

Defined the generalized Wright function.\\
The generalized Mittag-Leffler function is a special case, $E_{\alpha, \beta}(z)={ }_1 \Psi_1\left[\begin{array}{c}(1,1) \\ (\beta, \alpha)\end{array}\Bigr|z\right] $. \\

\end{definition}

\section{Description of the method of a generalized fractional power series solutions for fractional differential equations.}

The Generalized Fractional Power Series Solution for Fractional Differential Equations can be expressed as an infinite series in the form :
\begin{align}
y(t) = \sum_{i \geq 0, j \geq 0}^{\infty} c_{i, j} t^{i \alpha + j}, \quad \alpha \in (0, 1).
\end{align}

A Cauchy product generalized fractional power series for a real function $y(t)$ is an infinite series of the form:
\begin{align}
y(t) = \sum_{i \geq 0, j \geq 0}^{\infty} c_{i, j} t^{i \alpha + j} = \left( \sum_{i=0}^{\infty} a_i t^{i \alpha} \right) \left( \sum_{j=0}^{\infty} b_j t^j \right),
\end{align}
where $c_{i, j} = a_i b_j$ for all $i$ and $j$.\\
Generalized Cauchy product fractional power series were introduced recently [19] to solve linear inhomogeneous differential equations with fractional-order Caputo derivatives and constant coefficients. These series can also be applied more broadly to solve linear fractional order differential equations, including those with variable coefficients, when the series are not constructed as Cauchy products.\\
The generalized fractional power series serves as a fundamental approach to solving linear fractional order differential equations. For a rational order $\alpha = \frac{p}{q}$ where $p \leq q \in \mathbb{N}$, the series (3.1) can be utilized by summing over all $i$ except $j = kq$ for $k \in \mathbb{N}$. This enables the series to be transformed into a regular power series in $z = t^{\frac{1}{q}}$. In practical applications, as demonstrated below, it's unnecessary to restrict the sum to the minimal set of independent functions in the trial ansatz.
 \\
  
 This simple example is sufficient to illustrate the wider applicability of the method:
\begin{example}{(\cite{Angstmann1})}
    Let $ 0 < \alpha< 1$ and $ b_{0}\in\mathbb{R}$. then the fractional differential equation
    
\begin{align}
_0\mathcal{D}_t^{1-\alpha} y(t) & = y^{'}(t) 
\end{align}
with the initial condition $ y(0) =b_{0} $ and $\lim\limits_{t \to 0} {}_0\mathcal{D}_t^{1-\alpha} y(t)= 0 $ has its solution given by

$$y(t)=b_{0}E_{\alpha}\left(\ t^{\alpha}\right).$$
\end{example}

We assume that a convergent series solution exists and we can apply the Riemann-Liouville derivative  term by term to write
\begin{align} 
\nonumber _0\mathcal{D}_t^{1-\alpha} \sum_{\substack{ i \geq 0, j \geq 0}}^{\infty} C_{i, j}  t^{i \alpha+j} =\sum_{\substack{i+j \geq 0\\ i \geq 0, j \geq 0}}^{\infty} C_{i, j} \frac{\Gamma(i\alpha+j+1)}{\Gamma((i+1) \alpha+j+1)} t^{(i+1) \alpha+j-1}\\
\end{align}
We re-label summation indices to arrive
at the balance equations,
\begin{align}
\sum_{\substack{i+j \geq 1 \\ i \geq 0, j \geq 0}}^{\infty} c_{i, j}(i \alpha+j) t^{i\alpha+j-1}-\sum_{i \geq 1, j \geq 0}^{\infty} c_{i-1, j} \frac{\Gamma((i-1) \alpha+j+1)}{\Gamma(i \alpha+j)} t^{i\alpha+j-1}=0 .
\end{align}
We can immediately deduce that
\begin{align}
c_{0, j}&=0 \quad \forall j \geq 1 .\\
c_{i, j}&=\frac{\Gamma((i-1) \alpha+j+1)}{\Gamma(i \alpha+j+1)} c_{i-1, j}  
 \quad   i\geq1 ,j\geq0
\end{align}

It follows from (3.6) and from recursive applications of (3.7) that  $c_{i, j}=0$ for all $i \geq 0$ and $j \geq 1$. Thus the only remaining non-zero coefficients are those with  $j=0$, and the recurrence relation then simplifies to
\begin{align}
c_{i, 0}=\frac{\Gamma((i-1) \alpha+1)}{\Gamma(i \alpha+1)} c_{i-1,0}
\end{align}
Equivalently we can write
\begin{align}
c_{i, 0}=c_{0,0} \frac{1}{\Gamma(i \alpha+1)} .
\end{align}
The series solution can now be written as
\begin{align}
y(t)=b_{0} \sum_{i=0}^{\infty} \frac{t^{i\alpha}}{\Gamma(i \alpha+1)}=b_{0}E_{\alpha}\left(t^\alpha\right).
\end{align}

The generalized fractional power series method is applicable to linear fractional order differential equations with constant and variable coefficients (see, for example, \cite{Angstmann1},\cite{Jaradat}). However, this method faces limitations, especially when there are more than two fractional derivatives with distinct orders in the equation, as well as when one of its terms is multiplied by $t^{\alpha}$ for example.This method is a particular case of our algorithm, which generalizes both fractional and classical power series.

\section{  The algorithm for solving fractional  and classical Differential Equations  }

Here, we explain the algorithm for solving fractional and classical differential equations. The algorithm is based on a generalized fractional power series defined as follows:
 
 \begin{align}
  y(t)=\sum_{i_{0}=0 }^{\infty}\sum_{i_{1}=0 }^{\infty}...\sum_{i_{n}=0 }^{\infty} C_{i_{0},... ,i_{n}} t^{\sum_{k=0}^{n}i_{k} \alpha_{k}},
  \quad  \alpha_{k} \in \mathbb{R};\notin \mathbb{N}.
\end{align}

where $n \in \mathbb{N}, t \geq 0$ is a variable of indeterminate, and $ C_{i_{0},... ,i_{n}}$ are the coefficients of the series.\\

Conveniently, we here assumed that the center of NGFPS (4.1) is zero since this can always be done via the linear change of variable $\left(t-t_0\right) \mapsto t$.\\

  To expand a fractional power series solution with $n$ fractional indices, the algorithm selects the number of indices in the exponent based on the distinct orders of fractional derivatives $D^{\alpha_{k}}$, where $\alpha_{k} \in \mathbb{R}$;$\notin \mathbb{N}$ for all $k \in \{1, \ldots, n\}$. Furthermore, it accommodates situations where terms in the equation are multiplied by $t^{\alpha_{k}}$. This means that for the fractional derivative of order $\alpha_k$, we include $i_k\alpha_k$ in the exponent, as well as when one of its terms is multiplied by $t^{\alpha_k}$. For the classical derivative, we include $i_{0}\alpha_{0}$ in the exponent with $\alpha_{0}=1$.

To complete the description of the basic algorithm and its most important properties, we should provide some examples to illustrate the algorithm:

\begin{example}
    Consider the relaxation  fractional differential equation
    \begin{align}
        ^{c}\mathcal{D}_t^{\alpha} y(t) = y(t),
    \end{align}
    where \( 0 < \alpha < 1 \) and \( y(0) = y_{0} \). The solution to this equation is given by
    \begin{align}
        y(t) = y_{0} E_{\alpha}\left( t^{\alpha} \right),
    \end{align}  
\end{example}

Let us consider the following  fractional power series of  y(t) ,is an infinite series of the form\\
\begin{align}
  y(t)=\sum_{i \geq 0}^{\infty} c_{i} t^{i \alpha}  
\end{align} 
The choice of $i\alpha$ into to power depends on the fractional derivative $D^{\alpha}$.We assume that a convergent series solution exists and we can apply the fractional derivative  term by term to write
\begin{align} 
\nonumber \mathcal{D}^{\alpha} \sum_{\substack{ i \geq 0}}^{\infty} c_{i}  t^{i \alpha} =\sum_{\substack{i \geq 1}}^{\infty} c_{i} \frac{\Gamma(i\alpha+1)}{\Gamma((i-1) \alpha+1)} t^{(i-1) \alpha}\\
\end{align}
 We re-label summation indices to arrive
at the balance equations,
\begin{align}
\sum_{\substack{i \geq 1}}^{\infty} c_{i} \frac{\Gamma(i\alpha+1)}{\Gamma((i-1) \alpha+1)} t^{(i-1) \alpha} -\sum_{i \geq 1}^{\infty} c_{i-1} t^{(i-1)}=0 .
\end{align}

 We can immediately deduce that
\begin{align}
c_{i}&=\frac{\Gamma((i-1) \alpha+1)}{\Gamma(i \alpha+1)} c_{i-1,}  
 \quad   i\geq1 
\end{align}

 Equivalently we can write
\begin{align}
c_{i}=c_{0} \frac{1}{\Gamma(i \alpha+1)} \quad
\quad with \quad c_{0}=y(0) .
\end{align}
 The series solution can now be written as
\begin{align}
y(t)=y_{0} \sum_{i=0}^{\infty} \frac{t^{i\alpha}}{\Gamma(i \alpha+1)}=y_{0}E_{\alpha}\left(t^\alpha\right).
\end{align}

Let's turn to another example. It is about the fractional oscillation equation, a topic extensively discussed by numerous authors(see \cite{Mainardi};\cite{Lin} ) using the Laplace transform to solve it, which is here solved with a new generalized fractional power series as follows:

\begin{example}
     the fractional oscillation equation
    \begin{align}
        ^{C}D^\alpha y(t) + w^{2}y(t) = 0, \quad 1 < \alpha \leq 2,
    \end{align}
    with initial conditions
    \begin{align}
        y(0) = c_0 \in \mathbb{R}, \quad y'(0) = c_1 \in \mathbb{R}.
    \end{align}
   has its solution given by
    \begin{align}
        y(t) = c_{0} E_{\alpha, 1}\left(-w^{2}t^\alpha\right) + c_{1} t E_{\alpha, 2}\left(-w^{2}t^\alpha\right),
    \end{align}
   
\end{example}

\begin{proof}
Let us consider the following generalized fractional power series of  y(t) ,is an infinite series of the form\\
\begin{align}
 y(t)=\sum_{i \geq 0, j \geq 0}^{\infty} c_{i, j} t^{i \alpha+j}  \end{align} 
 
The choice of \(i\alpha\) as the exponent depends on the fractional derivative \(D^{\alpha}\), and the integer $j$ for the initial value depends on  the classical derivative. Assuming a convergent series solution exists, we can apply the fractional derivative term by term to write

\begin{align} 
\nonumber ^{C}D^\alpha(\sum_{i \geq 0, j \geq 0}^{\infty} c_{i, j} t^{i \alpha+j} )
\nonumber &=\sum_{\substack{j \geq 2}}^{\infty} c_{0, j} \frac{\Gamma(j+1)}{\Gamma(-\alpha+j+1)} t^{- \alpha+j} +\sum_{\substack{ i \geq 1, j \geq 0}}^{\infty} c_{i, j} \frac{\Gamma(i\alpha+j+1)}{\Gamma((i-1) \alpha+j+1)} t^{(i-1) \alpha+j}\\
\end{align}
We have also 
\begin{align}
\nonumber w^{2}y(t)
\nonumber&=w^{2}\sum_{ i \geq 0, j \geq 0}^{\infty} c_{i, j} t^{i \alpha+j }\\
\nonumber &=w^{2}\sum_{ i \geq 1, j \geq 0}^{\infty} c_{i-1, j} t^{(i-1) \alpha+j}\\
\end{align}
We  re-label summation indices to arrive at the balance equations
\begin{align}
\nonumber &\sum_{\substack{ i \geq 1, j \geq 0}}^{\infty} C_{i, j} \frac{\Gamma(i\alpha+j+1)}{\Gamma((i-1) \alpha+j+1)} t^{(i-1) \alpha+j}
-w^{2}\sum_{ i \geq 1, j \geq 0}^{\infty} c_{i-1, j} t^{(i-1) \alpha+j}= 0\\
\end{align}
We can immediately deduce that
\begin{align}
c_{0, j} & =0 \quad j \geq 2, \\
c_{i, j} & =- w^{2}\frac{\Gamma((i-1)\alpha+j+1)}{ \Gamma(i \alpha+j+1)} c_{i-1, j} \quad i \geq 1, j \geq 0.
\end{align}
It further follows from (4.18) and from recursive applications of (4.19) that $c_{i, j}=0$ for all $i \geq 0, j\geq 2$. Thus the only remaining non-zero coefficients are those with $j<2$  and we then have the solution of the form
\begin{align}
y(t)=\sum_{j=0}^{1}\sum_{i=0}^{\infty} c_{i,j}t^{i\alpha+j}=\sum_{j=0}^{1}t^{j}\sum_{i=0}^{\infty} c_{i,j}t^{i\alpha}
\end{align}
  Then we  have 
\begin{align}
c_{i,j}=-w^{2}\frac{\Gamma((i-1)\alpha+j+1)}{\Gamma(i\alpha+j+1)} c_{i-1,j} \quad i \geq 1,  j \in \{0, 1\} .
\end{align}

Equivalently we can write
\begin{align}
c_{i,j}=(-1)^{i}\frac{w^{2i}\Gamma(j+1)}{\Gamma(i\alpha+j+1)} c_{0,j} \quad i \geq 1, \quad  j  \in \{0, 1\} .
\end{align}
where $c_{0,j}=\frac{y^{(j)}(0)}{j!}$ and we then have

\begin{align}
c_{i,j}=(-1)^{i}\frac{w^{2i}}{\Gamma(i\alpha+j+1)} y^{(j)}(0)\quad i \geq 1,  j \in \{0, 1\} .
\end{align}

Then the series solution can now be written as
\begin{align}
y(t) =
 c_{0}E_{\alpha, 1}\left(-w^{2}t^\alpha\right)+ c_{1}tE_{\alpha, 2}\left(-w^{2}t^\alpha\right) .
\end{align} 
\end{proof}

One of the novelties of this paper lies in utilizing a generalized fractional power series to solve a type of  classical Differential
Equations. To further illustrate the algorithm, we propose an equation previously solved by Saigo and Kibas(\cite{Saigo4}), leveraging the properties of the Mittag-Leffler function with three indices as follows :

\begin{theorem}
    Let \( n \in \mathbb{N} \) and $\beta\in \mathbb{R}^{+}, \notin \mathbb{N}$. Consider the initial value problem defined by the differential equation
    \begin{align}
        y^{(n)}(t) &= b t^\beta y(t), \quad 0 \leq t < \infty, \quad b \neq 0, \\
        y^{(k-1)}(0) &= c_k, \quad k = 1, 2, \ldots, n,
    \end{align}
    where \( c_k \in \mathbb{R} \). This problem admits an absolutely convergent generalized fractional power series solution given by
    \begin{align}
        y(t) = \sum_{j=1}^n \frac{c_j}{(j-1)!} t^{j-1} E_{n, 1+\frac{\beta}{n}, \frac{\beta+j-1}{n}}\left(b t^{\beta+n}\right),
    \end{align}
\end{theorem}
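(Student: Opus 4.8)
The plan is to follow the algorithm of Section~4 directly. Since the equation $y^{(n)} = b t^{\beta} y$ couples the classical $n$-th derivative (contributing an integer index, $\alpha_0 = 1$) with multiplication by $t^{\beta}$ (contributing the non-integer index $\beta$), I posit the two-index generalized fractional power series
$$y(t) = \sum_{i \geq 0,\, j \geq 0}^{\infty} c_{i,j}\, t^{\,i\beta + j}.$$
First I would differentiate term by term, using the power rule of the first theorem in its $n$-fold form $D^{n} t^{p} = \frac{\Gamma(p+1)}{\Gamma(p-n+1)}\, t^{p-n}$, and separately expand $b t^{\beta} y = b\sum_{i,j} c_{i,j}\, t^{\,(i+1)\beta + j}$. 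After relabelling so that both sides are organized by the powers $t^{\,p\beta + q}$, I equate coefficients of like powers to obtain the balance equations.

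Next I would read off the consequences. The purely integer powers coming from $i = 0$ have no counterpart on the right (whose lowest non-integer power is $t^{\beta}$), which forces $c_{0,j} = 0$ for every $j \geq n$ and leaves the $n$ seeds $c_{0,0}, \ldots, c_{0,n-1}$ free; matching the remaining powers yields the recurrence $c_{p,\,q+n} = b\,\frac{\Gamma(p\beta + q + 1)}{\Gamma(p\beta + q + n + 1)}\, c_{p-1,\,q}$ for $p \geq 1$. The key structural observation is that this recurrence only ever links $c_{p,\,q+n}$ to $c_{p-1,\,q}$, so the nonzero coefficients lie on the diagonals $j = m + in$ seeded by $c_{0,m}$ for $m = 0, \ldots, n-1$. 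Writing $d_i = c_{i,\,m+in}$ and iterating gives the closed product
$$d_i = d_0\, b^{i} \prod_{l=0}^{i-1} \frac{\Gamma\big((l+1)\beta + ln + m + 1\big)}{\Gamma\big((l+1)\beta + (l+1)n + m + 1\big)},$$
while the exponent attached to $d_i$ collapses to $i(\beta+n) + m$.

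The heart of the proof is then to recognize this product as the coefficient sequence of the three-index Mittag-Leffler function introduced in Section~2. Setting $m = j-1$ and rewriting the Gamma arguments as $(l+1)\beta + ln + m + 1 = l(n+\beta) + \beta + j$ and $(l+1)\beta + (l+1)n + m + 1 = (l+1)(n+\beta) + j$, I would match the product against its defining product with middle index $1 + \tfrac{\beta}{n}$ and third index $\gamma = \tfrac{\beta + j - 1}{n}$, verifying that $n\big(i(1+\tfrac{\beta}{n}) + \gamma\big) + 1 = i(n+\beta) + \beta + j$ reproduces exactly the numerator argument and that its shift by one reproduces the denominator. This identifies the $m$-th diagonal sum as $d_0\, E_{n,\,1+\beta/n,\,(\beta+j-1)/n}\!\big(b t^{\beta+n}\big)$; since $d_0 = c_{0,m}$ is the Taylor coefficient $y^{(m)}(0)/m! = c_{j}/(j-1)!$ fixed by the initial data, summing the $n$ diagonals produces the stated formula. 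I would close by verifying the initial conditions---the correction terms carry powers $\geq \beta + n > n-1$, so they and their first $n-1$ derivatives vanish at $t = 0$, leaving $y^{(k-1)}(0) = c_k$---and the absolute convergence, which follows because the three-index function is entire (equivalently, from a ratio test on the Gamma quotient, whose modulus tends to $0$ by Stirling's asymptotics).

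The main obstacle I anticipate is the index bookkeeping in the middle two steps: correctly tracking that the recurrence propagates along the diagonals $j = m+in$, and then algebraically reconciling my product of Gamma quotients with the specific parametrization $E_{n,\,1+\beta/n,\,(\beta+j-1)/n}$. A secondary and more delicate point is justifying the matching of coefficients of like powers when $\beta$ is rational, where distinct pairs $(i,j)$ could produce coinciding exponents $i\beta + j$; I would either treat the monomials $t^{\,i\beta+j}$ as formally independent symbols, as the preceding examples implicitly do, or observe that the exponents $i(\beta+n)+m$ actually occurring on the diagonals are pairwise distinct for $\beta > 0$, so the identification is unambiguous.
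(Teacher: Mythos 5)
Your proposal takes essentially the same route as the paper's own proof: the same two-index ansatz $\sum_{i,j\ge 0}c_{i,j}t^{i\beta+j}$, the same balance equations, the same diagonal support $j=in+m$, the same iterated product of Gamma quotients, and the same matching with the three-index Mittag-Leffler parameters $\bigl(n,\,1+\tfrac{\beta}{n},\,\tfrac{\beta+j-1}{n}\bigr)$. The algebra you outline is correct and agrees with the paper's closed form, and your closing checks --- recovery of the initial conditions, absolute convergence via the ratio of Gamma quotients, and the caveat about coinciding exponents when $\beta$ is rational --- are sound and go beyond what the paper itself verifies.

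There is, however, one genuine gap, located exactly at what you call the key structural observation. From the balance equations you extract only two facts: $c_{0,j}=0$ for $j\ge n$, and the recurrence $c_{p,q+n}=b\,\frac{\Gamma(p\beta+q+1)}{\Gamma(p\beta+q+n+1)}\,c_{p-1,q}$ for $p\ge 1$, $q\ge 0$. These alone do not confine the nonzero coefficients to the diagonals $j=in+m$: the coefficients $c_{p,q}$ with $p\ge 1$ and $0\le q\le n-1$ are constrained by neither fact, since they never occur on the left-hand side of the recurrence (that side always carries second index $\ge n$). If, say, $c_{1,0}$ were nonzero, the recurrence would propagate it into an entire spurious diagonal $c_{i,(i-1)n}$, $i\ge 1$, i.e.\ an extra family of terms with leading power $t^{\beta}$ that is absent from the claimed solution, and nothing in your argument forbids this. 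What rules it out is a third family of balance equations, recorded separately in the paper as $c_{i,j}=0$ for $i\ge 1$, $0\le j\le n-1$ (its equation (4.34)): the powers $t^{p\beta+q-n}$ with $p\ge 1$, $0\le q\le n-1$ are produced by $y^{(n)}(t)$ with the nonvanishing factor $\frac{\Gamma(p\beta+q+1)}{\Gamma(p\beta+q-n+1)}$ (nonzero once exponents are treated as formally independent, the same convention you invoke for rational $\beta$), yet they have no counterpart in $b\,t^{\beta}y(t)$, all of whose exponents are of the form $(i+1)\beta+j$ with $i,j\ge 0$; hence these coefficients vanish. This is the same no-counterpart argument you used to kill $c_{0,j}$ for $j\ge n$; once it is also stated for this family, the diagonal structure is justified and the rest of your proof goes through as written.
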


\begin{proof}
Let us consider the following generalized fractional power series of  y(t) :\\
\begin{align}
  y(t)=\sum_{i \geq 0, j \geq 0}^{\infty} c_{i, j} t^{i \beta+j}  
\end{align}

The choice of \(i\beta\) and \(j\) depends on the monomial \(t^{\beta}\) and the classical derivative in (3.2). This means that when we multiply the monomial \(t^{\beta}\) by the power series solution \(y(t)\), we observe a change in the index \(i\) as follows\\
\begin{align}
\nonumber t^{\beta}y(t)
\nonumber&=\sum_{ i \geq 0, j \geq 0}^{\infty} c_{i, j} t^{(i+1) \beta+j}\\
\nonumber &=\sum_{ i \geq 1, j \geq n}^{\infty} c_{i-1, j-n} t^{i \beta+j-n}\\
\end{align}
Furthermore, the classical derivative of the solution \(y(t)\) also causes a change at the level of the index \(j\). Specifically, when we take the \(n\)-th derivative of the power series solution \(y(t)\), the index \(j\) is shifted as follows:\\
 
\begin{align} 
\nonumber y^{(n)}(t)
\nonumber &=\sum_{i\ge 0,j\ge 0 }^{\infty} C_{i,j}(i\beta+j)(i\beta+j-1)......(i\beta-n+1)t^{i\beta+j-n}\\
\end{align}

\begin{align}
\nonumber y^{(n)}(t)
\nonumber &=\sum_{\substack{ i \geq 1, j \geq n}}^{\infty} C_{i, j} \frac{\Gamma(i \beta+j+1)}{\Gamma(i \beta+j-n+1)} t^{i \beta+j-n}
+\sum_{\substack{  j \geq n}}^{\infty} C_{0, j} \frac{\Gamma(j+1)}{\Gamma(j-n+1)} t^{j-n}\\
\nonumber &+\sum_{\substack{j=0}}^{n-1}\sum_{\substack{ i \geq 1}}^{\infty} C_{i, j} \frac{\Gamma(i \beta+j+1)}{\Gamma(i \beta+j-n+1)} t^{i \beta+j-n}\\
\end{align}

We then substitute (3.6) into (3.8)
and re-label summation indices to arrive at the balance equations
\begin{align}
\nonumber &\sum_{\substack{ i \geq 0, j \geq 0}}^{\infty} C_{i, j} \frac{\Gamma(i \beta+j+1)}{\Gamma(i \beta+j-n+1)} t^{i \beta+j-n} 
-b \sum_{ i \geq 1, j \geq n}^{\infty} c_{i-1, j-n} t^{i \beta+j-n} = 0\\
\end{align}
We can immediately deduce that
\begin{align}
c_{0, j} & =0 \quad j \geq n, \\
c_{i, j} & =0 \quad i \geq 1, \quad for \quad all\quad  j \in \{0,.....n-1\} \\
c_{i, j} & = b\frac{\Gamma(i\beta+j-n+1)}{ \Gamma(i \beta+j+1)} c_{i-1, j-n} \quad i \geq 1, j \geq n .
\end{align}
It further follows from (4.33) and from recursive applications of (4.35) that $c_{i, j}=0$ for all $i \geq 1, j>i n +n-1$. Similarly it follows from (4.34) with (4.35) that $c_{i, j}=0$ for all $j \geq 1, j<i n $. Thus the only remaining non-zero coefficients are those with  $ in\leq j\leq in+n-1 $   and we then have the solution of the form
\begin{align}
y(t)=\sum_{k=0}^{n-1}\sum_{i=0}^{\infty} c(i) t^{i\beta+in+k}=\sum_{k=0}^{n-1}t^{k}\sum_{i=0}^{\infty} c(i) t^{i(\beta+n)}
\end{align}
where $C(0)=c_{0,k}$ and the remaining $c(i,k)=c_{i, in+K}$ are found from the recurrence relation, (4.34),then\\
\begin{align}
C(i,k)=b\frac{\Gamma(i \beta+in+k-n+1)}{\Gamma(i \beta+in+k+1)} c(i-1,k) \quad i \geq 1,  \quad k \in \{0,.....n-1\} .
\end{align}

This can be solved in closed form yielding
\begin{align}
C(i,k)= \prod_{m=0}^{i-1}b\frac{\Gamma((m+1) (\beta+n)+k-n+1)}{\Gamma((m+1)( \beta+n)+k+1)} C(0,k) \quad i\geq 1, \quad  k \in \{0,.....n-1\}.
\end{align}

Equivalently we can write
\begin{align}
C(i,k)= \prod_{m=0}^{i-1}b\frac{\Gamma(n(m(\frac{\beta}{n}+1)+\frac{\beta}{n}+\frac{k}{n})+1)}{\Gamma(n(m(\frac{\beta}{n}+1)+\frac{\beta}{n}+1+\frac{k}{n})+1)}C(0,k) \quad i\geq 1,\quad k \in \{0,.....n-1\} .
\end{align}

Where $C(0,k)=c_{0,k}=\frac{y^{(k)}(0)}{k!}$ \\

The series solution can now be written as
\begin{align}y(t)=\sum_{k=0}^{n-1} \frac{y^{(k)}(0)}{k!} t^{k} E_{n, 1+\frac{\beta}{n},\frac{\beta+k}{n}}\left(b t^{\beta+n}\right)=\sum_{j=1}^n \frac{c_j}{(j-1)!} t^{j-1} E_{n, 1+\frac{\beta}{n}, \frac{\beta+j-1}{n}}\left(b t^{\beta+n}\right).
 \end{align}
\end{proof}

 \begin{remark}
      If $n \in \mathbb{N}, m>0$ and $l \in \mathbb{R}$  and let $\lambda \in \mathbb{C}$.\\
      Then the following differentiation formula
\begin{align}
\left(\frac{\mathrm{d}}{\mathrm{d} z}\right)^n\left[z^{n(l-m+1)} E_{n, m, l}\left(\lambda z^{n m}\right)\right]=\prod_{j=1}^n[n(l-m)+j] z^{n(l-m)}+\lambda z^{n l} E_{n, m, l}\left(\lambda z^{n m}\right)
\end{align}
holds. In particular, if
$$
n(l-m)=-j \text { for some } j=1,2, \cdots, n,
$$
then
\begin{align}
\left(\frac{\mathrm{d}}{\mathrm{d} z}\right)^n\left[z^{n(l-m+1)} E_{n, m, l}\left(\lambda z^{n m}\right)\right]=z^{n l} E_{n, m, l}\left(\lambda z^{n m}\right) \end{align}
 \end{remark}

The following  equation is used by the authors for modeling fractional models of anomalous relaxation based on the Kilbas and Saigo function(\cite{oliveira5}), which is traditionally solved using an ansatz series method, but in this study, we apply our algorithm.
 
\begin{proposition}
 Let  $\beta\in \mathbb{R}^{+}, \notin \mathbb{N}$,The fractional differential equation, initial value problem,defined by
\begin{align}
^{C}D^\alpha y(t) & =\lambda t^\beta y(t) \quad \quad   0 \leq t <\infty, \quad \lambda \neq 0; \\
y(0) & =y_{0}   ;
\end{align}
where $ 0 < \alpha < 1$,  has a generalized fractional power series
solution
\begin{align}
y(t)= y_{0}E_{\alpha, 1+\frac{\beta}{\alpha},\frac{\beta}{\alpha}}\left(\lambda t^{\alpha+\beta}\right)
\end{align}
\end{proposition}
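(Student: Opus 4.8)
The plan is to reproduce, in the fractional setting, the scheme used in the preceding theorem for $y^{(n)}=b\,t^{\beta}y$, now with a single Caputo derivative of order $\alpha\in(0,1)$ playing the role of the integer derivative. Since the equation contains the derivative $^{C}D^{\alpha}$ and the multiplier $t^{\beta}$, the algorithm assigns two indices to the exponent: one counting steps of size $\alpha$ produced by the derivative, one counting steps of size $\beta$ produced by $t^{\beta}$. Accordingly I would posit
$$ y(t)=\sum_{i\ge 0,\,j\ge 0}^{\infty} c_{i,j}\, t^{\,i\beta+j\alpha}, $$
assume it converges so that $^{C}D^{\alpha}$ acts term by term, impose $c_{0,0}=y_{0}$ from the initial condition, and then determine all coefficients from the equation.

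First I would apply the Caputo power rule of Section 2 (with $n=1$): every monomial with $i\beta+j\alpha>0$ is sent to $c_{i,j}\frac{\Gamma(i\beta+j\alpha+1)}{\Gamma(i\beta+(j-1)\alpha+1)}t^{\,i\beta+(j-1)\alpha}$, while the constant term $c_{0,0}$ is annihilated. On the other side, $\lambda t^{\beta}y(t)=\lambda\sum_{i\ge 1,\,j\ge 0}c_{i-1,j}\,t^{\,i\beta+j\alpha}$, a pure shift of the first index. Subtracting and collecting like powers gives the balance equations. The derivative-generated terms sitting at the exponents $i\beta-\alpha$ (the $j=0$ terms, $i\ge 1$) have no counterpart on the right-hand side, so their coefficients must vanish, forcing $c_{i,0}=0$ for $i\ge1$; comparing the $i=0$ column likewise forces $c_{0,j}=0$ for $j\ge1$; and for $i\ge1,\ j\ge0$ one reads off the recurrence
$$ c_{i,j+1}=\lambda\,\frac{\Gamma(i\beta+j\alpha+1)}{\Gamma(i\beta+(j+1)\alpha+1)}\,c_{i-1,j}. $$

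This recurrence advances both indices by one at each step, hence preserves $i-j$ and propagates only along the main diagonal out of the seed $c_{0,0}=y_{0}$. Tracing any coefficient backward, a chain with $i\ne j$ terminates at a boundary value $c_{m,0}$ ($m\ge1$) or $c_{0,m}$ ($m\ge1$), both zero, so every off-diagonal coefficient vanishes and only the $c_{k,k}$ survive. Unrolling the diagonal recurrence yields the closed product
$$ c_{k,k}=y_{0}\,\lambda^{k}\prod_{i=0}^{k-1}\frac{\Gamma\!\big(i\alpha+(i+1)\beta+1\big)}{\Gamma\!\big((i+1)\alpha+(i+1)\beta+1\big)}, $$
so that $y(t)=\sum_{k\ge0}c_{k,k}\,t^{\,k(\alpha+\beta)}$. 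Writing $z=\lambda t^{\alpha+\beta}$ and matching the product against the three-index (Kilbas–Saigo) Mittag–Leffler function $E_{\alpha,\beta,\gamma}$ introduced in Section 2, the requirements $\alpha(ib+c)=i\alpha+(i+1)\beta$ and $\alpha(ib+c+1)=(i+1)(\alpha+\beta)$ pin down the second index $b=1+\tfrac{\beta}{\alpha}$ and the third index $c=\tfrac{\beta}{\alpha}$, giving exactly $y(t)=y_{0}\,E_{\alpha,\,1+\frac{\beta}{\alpha},\,\frac{\beta}{\alpha}}\!\left(\lambda t^{\alpha+\beta}\right)$.

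The main obstacle is not the unrolling of the recurrence but the justification of the formal steps: equating coefficients of like powers presupposes that the monomials appearing in the two sums are distinguishable and can be matched unambiguously, which is where the hypothesis $\beta\notin\mathbb{N}$ enters, keeping all exponents genuinely fractional and ensuring the derivative-shifted exponents $i\beta-\alpha$ do not coincide with an admissible right-hand exponent $m\beta+l\alpha$, $l\ge0$. I would close the argument by establishing absolute convergence for every $t\ge0$, legitimizing the term-by-term differentiation: in the ratio $c_{k+1,k+1}/c_{k,k}$ the quotient of Gamma functions has arguments differing by $\alpha$ and growing without bound, so by Stirling's formula it decays like a negative power of $k$, the successive-term ratio tends to $0$, and the series defines an entire function of $t^{\alpha+\beta}$, as claimed.
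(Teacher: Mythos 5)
Your proof is correct and follows essentially the same route as the paper: the same two-index ansatz $\sum c_{i,j}t^{i\beta+j\alpha}$ (with the roles of the indices merely swapped), the same balance equations forcing the boundary coefficients $c_{i,0}$ and $c_{0,j}$ to vanish, the same diagonal-only recurrence yielding $c_{k,k}=y_{0}\lambda^{k}\prod_{i=0}^{k-1}\frac{\Gamma(i\alpha+(i+1)\beta+1)}{\Gamma((i+1)(\alpha+\beta)+1)}$, and the same identification with the Kilbas--Saigo function $E_{\alpha,\,1+\beta/\alpha,\,\beta/\alpha}\left(\lambda t^{\alpha+\beta}\right)$. Your closing remarks on exponent distinguishability (the role of $\beta\notin\mathbb{N}$) and on Stirling-based absolute convergence go slightly beyond the paper, which leaves those justifications implicit.
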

\begin{proof}
Let us consider the following generalized fractional power series \\
\begin{align}  
y(t)=\sum_{i \geq 0, j \geq 0}^{\infty} c_{i, j} t^{i \alpha+j\beta}  
\end{align}
The choice of using $i\alpha+j\beta$ raised to the power depends on the monomial $t^{\beta}$ and the fractional derivative $D^{\alpha}$,we assume that a convergent series solution exists and we can apply the fractional derivative  term by term to write
\begin{align} 
\nonumber^{C}D^\alpha y(t) 
\nonumber &=\sum_{\substack{i+j \geq 1\\ i \geq 0, j \geq 0}}^{\infty} C_{i, j} \frac{\Gamma(i\alpha+j\beta+1)}{\Gamma((i-1) \alpha+j\beta+1)} t^{(i-1) \alpha+j\beta}\\
\end{align}
We have also 
\begin{align}
 t^{\beta}y(t)
=\sum_{ i \geq 0, j \geq 0}^{\infty} c_{i, j} t^{i \alpha+(j+1)\beta }
=\sum_{ i \geq 1, j \geq 1}^{\infty} c_{i-1, j-1} t^{(i-1) \alpha+j\beta}
\end{align}
We then substitute (4.46) and (4.47) into (4.42)
and re-label summation indices to arrive at the balance equations
\begin{align}
\nonumber &\sum_{\substack{i+j \geq 1\\ i \geq 0, j \geq 0}}^{\infty} C_{i, j} \frac{\Gamma(i\alpha+j\beta+1)}{\Gamma((i-1) \alpha+j\beta+1)} t^{(i-1) \alpha+j\beta}
-\lambda\sum_{ i \geq 1, j \geq 1}^{\infty} c_{i-1, j-1} t^{(i-1) \alpha+j\beta}= 0\\
\end{align}
Immediate deduction yields
\begin{align}
c_{0, j} & =0 \quad j \geq 1, \\
c_{i, 0} & =0 \quad i \geq 1,  \\
c_{i, j} & = \lambda\frac{\Gamma((i-1)\alpha+j\beta+1)}{ \Gamma(i \alpha+j\beta+1)} c_{i-1, j-1} \quad i \geq 1, j \geq 1.
\end{align}

From (4.49) and recursively from (4.51), $c_{i, j}=0$ for $i \geq 1, j>i$. Similarly, from (4.50) and (4.51), $c_{i, j}=0$ for $j \geq 1, j<i$. Thus, the only non-zero coefficients are $j=i$, yielding the solution of the forme :

\begin{align}
y(t)=\sum_{i=0}^{\infty} c(i) t^{i\alpha+i\beta}=\sum_{i=0}^{\infty} c(i) t^{i(\alpha+\beta)}
\end{align}
where $C(0)=c_{0,0}$ and the remaining $c(i)=c_{i, i}$ are found from the recurrence relation, (3.19) with $j=i$ and we then have the solution of the form
\begin{align}
C(i)=\lambda\frac{\Gamma((i-1)\alpha+j\beta+1)}{\Gamma(i\alpha+j\beta+1)} c(i-1) \quad i \geq 1, n \geq 1 .
\end{align}

This can be writen as 
\begin{align}
C(i)= \prod_{m=0}^{i-1}\lambda\frac{\Gamma(m (\alpha+\beta)+\beta+1)}{\Gamma(m(\alpha+ \beta)+\alpha+\beta+1)} C(0) \quad i\geq 1, j\geq 1 .
\end{align}

Equivalently we deduce that 
\begin{align}
C(i)= \prod_{m=0}^{i-1}\lambda\frac{\Gamma(\alpha(m(\frac{\beta}{\alpha}+1)+\frac{\beta}{\alpha})+1)}{\Gamma(\alpha(m(\frac{\beta}{\alpha}+1)+\frac{\beta}{\alpha}+1))}C(0) \quad i\geq 1, j \geq 1 .
\end{align}

Where $C(0)=c_{0,0}={y(0)}$ the series solution can now be written as

\begin{align}
    y(t)= y_{0}E_{\alpha, 1+\frac{\beta}{\alpha},\frac{\beta}{\alpha}}\left(\lambda t^{\alpha+\beta}\right).
    \end{align}
\end{proof}


To make our algorithm useful for more situations, we state the following theorem:\\

\begin{theorem}
 Let $ 0 < \beta < 1$ and $\nu\in \mathbb{R}^{+}, \notin \mathbb{N}$, then the fractional differential equation
\begin{align}
^{C}D^\beta\left(t^v \frac{d y(t)}{d t}\right)= t^{v-1} y(t)
\end{align}
with the initial condition $ y(0) =y_{0}$ has its solution given by
\begin{align}
y(t)=\Gamma(\nu)y_{0}W_{\beta, v}\left(\frac{t^\beta}{\beta}\right)
\end{align}
 \end{theorem}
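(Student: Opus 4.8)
The plan is to apply the generalized-power-series algorithm of this section, the single fractional index being dictated by the lone Caputo derivative $^{C}D^{\beta}$ in the equation. Because the factors $t^{\nu}$ and $t^{\nu-1}$ sit on opposite sides, I expect their shifts in the exponent to cancel against each other in the balance relation, so that the single-index ansatz
$$y(t)=\sum_{i\geq 0}^{\infty}c_{i}\,t^{i\beta}$$
should already be self-consistent, with no auxiliary index attached to $\nu$. First I would treat the left-hand side: differentiating termwise gives $t^{\nu}y'(t)=\sum_{i\geq 1}^{\infty}c_{i}\,\beta i\,t^{i\beta+\nu-1}$ (the constant term is killed by $d/dt$), and applying $^{C}D^{\beta}$ term by term via Theorem 2.1 for the power function produces the coefficient $\beta i\,\Gamma(i\beta+\nu)/\Gamma((i-1)\beta+\nu)$ in front of $t^{(i-1)\beta+\nu-1}$.

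Next I would expand the right-hand side as $t^{\nu-1}y(t)=\sum_{i\geq 0}^{\infty}c_{i}\,t^{i\beta+\nu-1}$ and relabel $i\mapsto i-1$ so that both members carry the common power $t^{(i-1)\beta+\nu-1}$; this confirms the anticipated cancellation of the $\nu$-shift. Equating coefficients then gives the balance equation, hence the recurrence
$$c_{i}=\frac{\Gamma((i-1)\beta+\nu)}{\beta\,i\,\Gamma(i\beta+\nu)}\,c_{i-1},\qquad i\geq 1,$$
with $c_{0}=y(0)=y_{0}$ read off from the $t^{0}$ term. Iterating this relation, the Gamma factors telescope to $\Gamma(\nu)/\Gamma(i\beta+\nu)$ while the factors $\beta i$ accumulate to $\beta^{i}\,i!$, yielding the closed form $c_{i}=\Gamma(\nu)\,y_{0}/(\beta^{i}\,i!\,\Gamma(i\beta+\nu))$.

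Finally I would reassemble the series, grouping $t^{i\beta}/\beta^{i}=(t^{\beta}/\beta)^{i}$, to obtain
$$y(t)=\Gamma(\nu)\,y_{0}\sum_{i=0}^{\infty}\frac{1}{\Gamma(\nu+\beta i)}\frac{(t^{\beta}/\beta)^{i}}{i!}=\Gamma(\nu)\,y_{0}\,W_{\beta,\nu}\!\left(\frac{t^{\beta}}{\beta}\right),$$
where the last equality identifies the sum with the standard Wright function ${}_{0}\Psi_{1}$ of Definition 2.9, which is the claimed solution. The step I expect to be most delicate is the bookkeeping for the composite operator $^{C}D^{\beta}\!\left(t^{\nu}\,d/dt\right)$: one must correctly track how the Gamma-function coefficients shift and combine and verify that the $\nu$-dependent parts of the exponents indeed cancel between the two sides, leaving exactly the powers $t^{i\beta}$. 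A secondary point requiring care is justifying the termwise action of the Caputo derivative and recognizing, through the telescoping, that the accumulated factor $1/\beta^{i}$ is precisely what rescales the argument of the series to $t^{\beta}/\beta$ rather than $t^{\beta}$.
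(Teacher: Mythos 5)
Your proof is correct, and its computational core --- the recurrence $c_i=\frac{\Gamma((i-1)\beta+\nu)}{\beta i\,\Gamma(i\beta+\nu)}c_{i-1}$, the telescoped form $c_i=\frac{\Gamma(\nu)}{\beta^i\, i!\,\Gamma(i\beta+\nu)}y_0$, and the resummation into $\Gamma(\nu)y_0\,W_{\beta,\nu}\!\left(t^{\beta}/\beta\right)$ --- coincides exactly with the paper's. The structural difference lies in the ansatz: you posit the single-index series $\sum_{i\geq 0}c_i t^{i\beta}$ at the outset, betting that the $\nu$-shifts from $t^{\nu}$ and $t^{\nu-1}$ cancel between the two sides, and then verify that bet by substitution; the paper instead runs its three-index algorithm with the trial series $\sum_{i,j,k\geq 0}C_{i,j,k}\,t^{i\beta+j\nu+k}$ (one index for $D^{\beta}$, one for the monomial $t^{\nu}$, one for the classical derivative) and proves from the balance equations that $C_{i,j,k}=0$ whenever $j+k\geq 1$, so the collapse to the powers $t^{i\beta}$ is \emph{derived} rather than assumed. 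Your route is shorter and suffices for the statement as written, since the theorem only asserts that the displayed series solves the initial value problem: a self-consistent construction with $y(0)=y_0$ is a proof of existence. What the paper's heavier ansatz buys is the content its section is advertising: within the whole class of generalized fractional power series dictated by the algorithm, the equation forces the single-index form, so the reduction you guessed is shown to be unavoidable rather than fortunate. Note also that the delicate points you flag (termwise application of $^{C}D^{\beta}$, and the power rule applied to exponents $i\beta+\nu-1$, which can lie in $(-1,0)$ when $\beta+\nu<1$) are present in the paper's computation as well and are passed over there in the same formal manner, so your proposal is no less rigorous than the original on these points.
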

 \begin{proof}
 We  suppose that the solution of this equation is in the form
 \begin{align}   y(t)=\sum_{\substack{i, j, k \geqslant 0 }}^{\infty} C_{i, j,k} t^{i \beta+j\nu+k}  
 \end{align}
 
 The choice of raising $i \beta+j\nu+k$ to the power depends on the fractional derivative $D^{\beta}$ and the monomial $t^{\nu}$, while the integer $k$ depends on the classical derivative.
 
One can easily obesrve that  
\begin{align}
t^{\nu-1}  y(t)=\sum_{\substack{i, j, k \geqslant 0 
}}^{+\infty}C_{i, j,k} t^{i\beta+(j+1)\nu+k-1} 
\end{align}
In the other hand, we know that
\begin{align}
t^\nu \frac{d y(t)}{d t}=\sum_{\substack{i, j, k \geqslant 0 \\
i+j+k \geqslant 1}}^{+\infty}C_{i, j,k}\left(i \beta+j\nu+k\right)  t^{i\beta+(j+1)\nu+k-1} 
\end{align}

    we can also observe that $t^\nu \frac{d y(t)}{d t}$ The conformable fractional derivative of order $1-\nu$ \\

Then one has
\begin{align}
 \frac{d^\beta}{d t^\beta}\left(t^\nu \frac{d y(t)}{d t}\right)=\sum_{\substack{i, j, k \geqslant 0 \\
i+j+k \geqslant 1}}^{+\infty}C_{i, j,k}\left(i \beta+j\nu+k\right) \frac{\Gamma(i \beta+(j+1) v+k)}{\Gamma((i-1) \beta+(j+1) v+k)} t^{(i-1)\beta+(j+1)\nu+k-1} 
\end{align}
We re-label summation indices to arrive at the balance equations:
\begin{align}
\sum_{\substack{i, j, k \geqslant 0 \\
i+j+k \geqslant 1}}^{+\infty}C_{i, j,k}\left(i \beta+j\nu+k\right) \frac{\Gamma(i \beta+(j+1) v+k)}{\Gamma((i-1) \beta+(j+1) v+k)} t^{(i-1)\beta+(j+1)\nu+k-1}-\sum_{\substack{i\geq1 j, k \geqslant 0 
}}^{+\infty}C_{i-1, j,k} t^{(i-1)\beta+(j+1)\nu+k-1} =0
\end{align}
We can immediately deduce that
\begin{align}
C_{0, j,k} & =0 \quad j+k \geq 1, \\
C_{i, j,k} & = \frac{\Gamma((i-1)\beta+(j+1)\nu+k)}{ (i\beta+j\nu+k)\Gamma(i \beta+(j+1)\nu+k)} c_{i-1, j,k} \quad i \geq 1, j \geq 0,k\geq0.
\end{align}
from (4.64) and (4.65) ; $c_{i,j,k }= 0 $ for all $i \geq 0 $ and $j+k \geq 1$. The only surviving term in the
series with powers in $j +k$ is $j+k = 0$, and the recurrence relation then simplifies to

\begin{align}
c_{i, 0,0}=\frac{1}{i\beta}\frac{\Gamma((i-1) \beta+\nu)}{\Gamma(i \beta+\nu)} c_{i-1,0,0}
\end{align}
where $C(0)=c_{0,0,0}=y(0)$ and the remaining $c(i)=c_{i, 0,0}$   and we then have 
\begin{align}
c(i) & = \frac{1}{i!\beta^{i}}\frac{\Gamma(\nu)}{\Gamma(i \beta+\nu)} c(0)
\end{align}
The series solution can now be written as
\begin{align}
y(t)=  y_{0}\sum_{i=0}^{\infty}\frac{1}{i!\beta^{i}}\frac{\Gamma(\nu)}{\Gamma(i \beta+\nu)}t^{i\beta}=\Gamma(\nu)y_{0}W_{\beta, v}\left(\frac{t^\beta}{\beta}\right)
\end{align}
\end{proof}

\begin{remark}
    By direct calculations we have that
$$
\begin{aligned}
\frac{d^\beta}{d t^\beta}\left(t^\nu \frac{d}{d t}\right) W_{\beta, v}\left(\frac{t^\beta}{\beta}\right) & =\frac{d^\beta}{d t^\beta} \sum_{k=1}^{\infty} \frac{t^{k \beta+v-1}}{(k-1)!\beta^{k-1}\Gamma(k \beta+v)} \\
& = \sum_{k=1}^{\infty} \frac{t^{k \beta+v-1-\beta}}{(k-1)!\beta^{k-1}\Gamma(k \beta+v-\beta)}= t^{\nu-1} W_{\beta, v}\left(\frac{t^\beta}{\beta^{}}\right).
\end{aligned}
$$
\end{remark}
 Let's turn to another example, which concerns the challenge of applying our algorithm as follows:
\begin{example}
    Let $ 0 < \alpha < 1$  and $\beta\in \mathbb{R}^{+}, \notin \mathbb{N}$. Consider the initial value problem defined by the differential equation
    \begin{align}
        t^{\beta}y^{"}(t)+T_{\alpha}y(t)+y(t) &= 0, \quad 0 \leq t < \infty, \\
        y(0) &= c_0, \quad  c_0 \in \mathbb{R},
        \\
        y^{'}(0) &= c_1, \quad  c_1 \in \mathbb{R},
    \end{align}
\end{example}
By applying our algorithm, we suppose that the solutions to this equation can be expressed as follows:
\begin{align}   y(t)=\sum_{\substack{i, j, k \geqslant 0 }}^{\infty} C_{i, j,k} t^{i\alpha+j\beta+k}  
 \end{align}
 
We then substitute (4.72) into (4.69) and  and equate cofficients of equal powers of t we obtain:
\newpage
\begin{align}
c_{i, 0,k-2} & = 0 & \text{for } i=1,2, \quad k \geq 2,\\
c_{i, j,k} & = 0 & \quad\text{for } k=0,1, \quad i \geq 1, j \geq 1,
\end{align}
\begin{equation}
\big((i-1)\alpha + (j-1)\beta + k\big)\big((i-1)\alpha + (j-1)\beta + k-1)\big) c_{i, j-1, k} + \big(i\alpha + j\beta + k-2\big) c_{i, j, k-2} + c_{i-1, j, k-2} = 0 \quad \text{for } i \geq 1, j \geq 1, k \geq 2
\end{equation}

The challenges in applying our algorithm to solve a special type of recurrence relation (4.75) with more than two indices are significant because there is no general procedure for solving such recurrence relations, which is why it is considered an art \cite{Knopfmacher}.That means when an equation contains more than two terms, it becomes more challenging to apply our algorithm, except in some particular cases. For example, when we consider example (4.3) with $ \beta =2-\alpha $,the initial value problem defined by:\\
\begin{theorem}
   Let $ 0 < \alpha < 1$  . Consider the initial value problem defined by the differential equation:
   \begin{align}
        t^{2-\alpha}y^{"}(t)+T_{\alpha}y(t)+y(t) &= 0, \quad 0 \leq t < \infty, \\
        y(0) &= c_0, \quad  c_0 \in \mathbb{R},
        \\
        y^{'}(0) &= c_1, \quad  c_1 \in \mathbb{R},
    \end{align}
 has its solution given by : 
\begin{align}
    y(t) =c_{0}\sum_{i=0}^{\infty}\frac{(-1)^{i}}{i!^{2}\alpha^{2i}}t^{i\alpha} +c_{1}\sum_{i=0}^{\infty}\frac{(-1)^{i}}{\prod_{m=1}^{i}(m\alpha+1)^{2}}t^{i\alpha+1}
\end{align}
\end{theorem}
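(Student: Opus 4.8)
The plan is to exploit the special value $\beta = 2-\alpha$, which is precisely what turns the intractable three-index recurrence of Example 4.3 into two decoupled two-term recurrences. For $t>0$ the conformable rule gives $T_\alpha y(t) = t^{1-\alpha}y'(t)$, so the governing equation reads $t^{2-\alpha}y''(t)+t^{1-\alpha}y'(t)+y(t)=0$. The decisive observation is that, when $\beta=2-\alpha$, both $t^{2-\alpha}y''$ and $t^{1-\alpha}y'$ send a monomial $t^{p}$ to a multiple of $t^{p-\alpha}$, whereas $y$ leaves it at $t^{p}$. Hence, starting from the general ansatz $y(t)=\sum_{i,j,k\ge 0}C_{i,j,k}t^{i\alpha+j\beta+k}$ of Example 4.3 and setting $\beta=2-\alpha$, the surviving coefficients organise into the two chains of exponents $\{i\alpha\}_{i\ge0}$ and $\{i\alpha+1\}_{i\ge0}$, seeded respectively by the data $y(0)=c_0$ and $y'(0)=c_1$. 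I would therefore reduce to the two sub-series $y=\sum_i a_i t^{i\alpha}$ and $y=\sum_i b_i t^{i\alpha+1}$ and treat each separately, their sum being the general solution by linearity.

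For each family I would substitute, apply the power rule together with $T_\alpha(t^{p})=p\,t^{p-\alpha}$, and use the identity $p(p-1)+p=p^{2}$ to merge the second-derivative and conformable-derivative contributions of a monomial $t^{p}$ into the single factor $p^{2}t^{p-\alpha}$. Matching coefficients of equal powers then yields the clean telescoping recurrences $(i\alpha)^{2}a_i=-a_{i-1}$ for the first family and $(i\alpha+1)^{2}b_i=-b_{i-1}$ for the second. Solving them gives $a_i=a_0(-1)^{i}/((i!)^{2}\alpha^{2i})$ and $b_i=b_0(-1)^{i}/\prod_{m=1}^{i}(m\alpha+1)^{2}$, and reading off the initial data fixes $a_0=y(0)=c_0$ and $b_0=y'(0)=c_1$; substituting back reproduces exactly the two series in the statement.

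To close the argument I would justify the formal steps a posteriori: since $|a_i/a_{i-1}|=1/(i\alpha)^{2}\to0$ and $|b_i/b_{i-1}|=1/(i\alpha+1)^{2}\to0$, the ratio test gives absolute convergence for every $t\ge0$, so term-by-term differentiation and the coefficient comparison are legitimate.

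The step I expect to be the main obstacle is the bookkeeping that guarantees the decoupling — namely showing that the multi-index recurrence (4.75) with $\beta=2-\alpha$ forces every coefficient outside the families $C_{i,0,0}$ and $C_{i,0,1}$ to vanish, and in particular handling the low-index seed terms, where the exponent matching is tight (for instance the image of the linear seed $t$ under $T_\alpha$, whose exponent $1-\alpha$ must be reconciled within the overall balance). This is exactly the \emph{art} of solving a recurrence in more than two indices that the paper stresses just after (4.75); once the decoupling is secured, everything else reduces to the routine telescoping described above.
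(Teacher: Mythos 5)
Your route is the same one the paper takes (the two-index ansatz $\sum_{i,j}c_{i,j}t^{i\alpha+j}$, reduction to the chains $j=0$ and $j=1$, and the telescoping recurrences $(i\alpha+j)^{2}c_{i,j}=-c_{i-1,j}$), but the step you explicitly defer --- verifying that the balance really decouples, ``in particular handling the low-index seed terms'' --- is not bookkeeping that can be supplied later: carried out honestly, it refutes the decoupling for the second family. Since $t^{2-\alpha}\frac{d^{2}}{dt^{2}}+T_{\alpha}$ maps $t^{p}$ to $\bigl(p(p-1)+p\bigr)t^{p-\alpha}=p^{2}t^{p-\alpha}$, substituting $y=\sum_{i,j\ge0}c_{i,j}t^{i\alpha+j}$ gives the balance
\begin{align*}
\sum_{i\ge0,\;j\ge0}(i\alpha+j)^{2}c_{i,j}\,t^{(i-1)\alpha+j}\;+\;\sum_{i\ge1,\;j\ge0}c_{i-1,j}\,t^{(i-1)\alpha+j}\;=\;0 .
\end{align*}
For every $\alpha\in(0,1)$ that is not of the form $1/m$, the exponent $1-\alpha$, i.e.\ the index pair $(i,j)=(0,1)$, occurs in the first sum only, so its coefficient must vanish on its own: $1^{2}\cdot c_{0,1}=0$. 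In your notation this is an extra $i=0$ equation $b_{0}=0$ sitting in front of your recurrence $(i\alpha+1)^{2}b_{i}=-b_{i-1}$, and it forces $c_{1}=0$. The asymmetry between the two chains goes the wrong way for you: the seed $t^{0}$ of the $j=0$ chain is annihilated by the derivative part (its factor is $0^{2}=0$), but the seed $t$ of the $j=1$ chain is not --- $T_{\alpha}t=t^{1-\alpha}$ survives, and nothing else in the expansion produces that exponent to cancel it.

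Consequently no completion of your plan can succeed, because the statement itself fails for $c_{1}\neq0$: substituting the claimed solution back into the equation leaves exactly the residual $c_{1}t^{1-\alpha}$. Concretely, for $\alpha=\tfrac12$ the second series is $u=t-\tfrac49\,t^{3/2}+\tfrac19\,t^{2}-\cdots$ and one checks $t^{3/2}u''+t^{1/2}u'+u=t^{1/2}$, not $0$. For what it is worth, the paper's own proof stumbles at precisely the spot you flagged: it asserts $c_{0,j}=0$ only for $j\ge2$ and states the recurrence only for $i\ge1$, $j\ge1$, silently omitting the $(0,1)$ constraint, and that omission is what lets the spurious $c_{1}$-series through. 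So your instinct that reconciling the exponent $1-\alpha$ is the crux was exactly right; the honest conclusion is that it cannot be reconciled: the $c_{0}$-series $\sum_{i}\frac{(-1)^{i}}{i!^{2}\alpha^{2i}}t^{i\alpha}$ is a genuine solution, but the $c_{1}$-part of the claimed solution is not, and the initial-value problem as posed (prescribing $y'(0)$) is not solvable by this ansatz.
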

\begin{proof}
Let us consider the following generalized fractional power series of  y(t) ,is an infinite series of the form\\
\begin{align}
 y(t)=\sum_{i \geq 0, j \geq 0}^{\infty} c_{i, j} t^{i \alpha+j}  
 \end{align} 
We then substitute (4.80) into (4.76) and  and equate cofficients of equal powers of t we obtain:
\begin{align}
c_{0, j} & =0 \quad j \geq 2, \\
c_{i, j} & = \frac{-1}{ (i \alpha+j)^{2}} c_{i-1, j} \quad i \geq 1, j \geq 1.
\end{align}

It further follows from (4.33) and from recursive applications of (4.35) that $c_{i, j}=0$ for all $i \geq 1, j>1$. Thus the only remaining non-zero coefficients are those with  $  j\leq 1 $   and we then have the solution of the form
\begin{align}
y(t)=\sum_{j=0}^{1}\sum_{i=0}^{\infty} c_{i,j} t^{i\alpha+j}=\sum_{j=0}^{1}t^{j}\sum_{i=0}^{\infty} c_{i,j}t^{i\alpha }
\end{align}
The recurrence relation(4.83) then simplifies to\\
\begin{align}
c_{i, j} = \frac{(-1)^{i}}{ \prod_{m=1}^{i}(m\alpha+j)^{2}} c_{0, j} \quad i \geq 1, j \geq 1.
\end{align}
Where $c_{0,j}=\frac{y^{(j)}(0)}{j!}$ 
The series solution is thus given by

\begin{align}
    y(t) =c_{0}\sum_{i=0}^{\infty}\frac{(-1)^{i}}{i!^{2}\alpha^{2i}}t^{i\alpha} +c_{1}\sum_{i=0}^{\infty}\frac{(-1)^{i}}{\prod_{m=1}^{i}(m\alpha+1)^{2}}t^{i\alpha+1}
\end{align}
\end{proof}

\section*{Conclusion and comments}

In this paper, we have explored the application of generalized fractional power series methods to solve linear fractional order differential equations with both constant and variable coefficients. We have identified the limitations of traditional methods such as the Laplace transform, particularly when dealing with inhomogeneous equations and variable coefficients. To address these challenges, we developed a new algorithm that extends the power series method to fractional differential equations, offering a more comprehensive and effective approach.\\

Our findings demonstrate that this new algorithm not only generalizes the classical and fractional power series method but also provides a robust solution framework for equations involving fractional derivatives. Through various examples, we illustrated the efficacy and versatility of our approach in solving complex differential equations. However, applying our algorithm to solve a special type of recurrence relation with more than two indices presents significant challenges because there is no general procedure for solving such recurrence relations. This implies that when an equation contains more than two terms, it becomes more challenging to apply our algorithm, except in some particular cases. \\

Overall, the contributions of this paper lie in extending the power series method to a broader class of differential equations, thereby opening new avenues for research and application in various scientific domains.

\section*{Data Availability}
No data were used to support this study.
\section*{Conflicts of Interest}
The author declares no conflicts of interest.
\section*{Acknowledgements}
This paper is dedicated to my late father, Assebbane Lahcen. The authors are deeply grateful to the anonymous referee for his several valuable and insightful suggestions which improved the quality of the present paper.



\end{document}